\newtheorem{theoremIntro}{Theorem}[]
\newtheorem{theorem}{Theorem}[section]
\newtheorem{lemma}[theorem]{Lemma}
\newtheorem{proposition}[theorem]{Proposition}
\newtheorem{definition}[theorem]{Definition}
\newtheorem{corollary}[theorem]{Corollary}
\theoremstyle{remark}
\numberwithin{equation}{section}
\newcommand{\calE}{\ensuremath{\mathcal{E}}}
\newcommand{\calF}{\ensuremath{\mathcal{F}}}
\newcommand{\calG}{\ensuremath{\mathcal{G}}}
\newcommand{\calI}{\ensuremath{\mathcal I}}
\newcommand{\calJ}{\ensuremath{\mathcal J}}
\newcommand{\calP}{\ensuremath{\mathcal{P}}}
\newcommand{\calQ}{\ensuremath{\mathcal{Q}}}
\newcommand{\calR}{\ensuremath{\mathcal{R}}}
\newcommand{\calS}{\ensuremath{\mathcal{S}}}
\newcommand{\calT}{\ensuremath{\mathcal{T}}}
\newcommand{\calV}{\ensuremath{\mathcal{V}}}
\newcommand{\calW}{\ensuremath{\mathcal {W}}}
\newcommand{\calX}{\ensuremath{\mathcal {X}}}
\newcommand{\tors}{\ensuremath{\mathrm{tors}}}
\newcommand{\bfone}{\ensuremath{\mathbf{1}}}
\newcommand{\bbN}{\ensuremath{\mathbb{N}}}
\newcommand{\bbZ}{\ensuremath{\mathbb{Z}}}
\begin{document}

\title[Infinite co-minimal pairs involving lacunary sequences]{Infinite co-minimal pairs involving lacunary sequences and generalisations to higher dimensions}

\author{Arindam Biswas}
\address{Department of Mathematics, Technion - Israel Institute of Technology, Haifa 32000, Israel}
\curraddr{}
\email{biswas@campus.technion.ac.il}
\thanks{}

\author{Jyoti Prakash Saha}
\address{Department of Mathematics, Indian Institute of Science Education and Research Bhopal, Bhopal Bypass Road, Bhauri, Bhopal 462066, Madhya Pradesh,
India}
\curraddr{}
\email{jpsaha@iiserb.ac.in}
\thanks{}

\subjclass[2010]{11B13, 05B10, 11P70, 05E15}

\keywords{Additive complements, Minimal complements, Sumsets, Representation of integers, Additive number theory}

\begin{abstract}
	The study of minimal complements in a group or a semigroup was initiated by Nathanson. The notion of minimal complements and being a minimal complement leads to the notion of co-minimal pairs which was considered in a prior work of the authors. In this article, we study which type of subsets in the integers and free abelian groups of higher rank can be a part of a co-minimal pair. We show that a majority of lacunary sequences have this property. From the conditions established, one can show that any infinite subset of any finitely generated abelian group has uncountably many subsets which is a part of a co-minimal pair. Further, the uncountable collection of sets can be chosen so that they satisfy certain algebraic properties.
\end{abstract}

\maketitle

\section{Introduction and Motivation}

 Let $(G,+)$ be an abelian group and $W\subseteq G$ be a nonempty subset. A nonempty set $W'\subseteq G$ is said to be an \textit{additive complement} to $W$ if $W + W' = G.$ Additive complements have been studied since a long time in the context of representations of the integers e.g., they appear in the works of Erd\H{o}s, Hanani, Lorentz and others. See \cite{Lorentz54, Erdos54, ErdosSomeUnsolved57} etc. The notion of minimal additive complements for subsets of groups was introduced by Nathanson in \cite{NathansonAddNT4}. An additive complement $W'$ to $W$ is said to be minimal if no proper subset of $W'$ is an additive complement to $W$, i.e., 
$$W + W' = G \,\text{ and }\, W + (W'\setminus \lbrace w'\rbrace)\subsetneq G \,\,\, \forall w'\in W'.$$ 
Minimal complements are intimately connected with the existence of minimal nets in groups. See \cite[Section 2]{NathansonAddNT4} and \cite[Section 2.1]{MinComp1}. Further, in case of the additive group $\mathbb{Z}$, they are related to the study of minimal representations. See \cite[Section 3]{NathansonAddNT4}.

Given two nonempty subsets $A, B$ of a group $G$, they are said to form a co-minimal pair if $A \cdot B = G$, and $A' \cdot B \subsetneq G$ for any $\emptyset \neq A' \subsetneq A$ and $A\cdot B' \subsetneq G$ for any $\emptyset \neq B' \subsetneq B$. Thus, they are pairs $(A\in G,B\in G)$ such that each element in a pair is a minimal complement to the other. Co-minimal pairs in essence capture the tightness property of a set and its complement. Further, they are a strict strengthening of the notion of minimal complements in the sense that a non-empty set $A$ might admit a minimal complement, but might not be a part of a co-minimal pair. See \cite[Lemma 2.1]{CoMin1}. 

Which sort of sets can be a part of a co-minimal pair is an interesting question. It was shown in \cite[Theorem B]{CoMin1} that if $G$ is a free abelian group (of any rank $\geq 1$), then, given any non-empty finite set $A$, there exists another set $B$ such that $(A,B)$ forms a co-minimal pair. 
Moreover, in a very recent work of Alon, Kravitz and Larson, they establish that any nonempty finite subset of an infinite abelian group is a minimal complement to some subset \cite[Theorem 2]{AlonKravitzLarson}, and this implies that the statement of \cite[Theorem B]{CoMin1} holds for nonempty finite subsets of any infinite abelian group. 
However, the existence of non-trivial co-minimal pairs involving infinite subsets $A$ and $B$ were unknown, until recently. It was shown in \cite{CoMin2} that such pairs exist and explicit constructions of two such pairs in the integers $\mathbb{Z}$ were provided. The aim of this article is to establish that infinite co-minimal pairs are abundant along majority of lacunary sequences in the integers and from there, draw conclusions on which sort of infinite subsets can be a part of a co-minimal pair. We are considering the underlying set associated with the sequence and this implies that there is some sparseness between successive elements of the  underlying set. These subsets also satisfy a number of algebraic and combinatorial properties. Moreover, they can be generalised to any free abelian group of finite rank.

\subsection{Statement of results}

First, let us recall the notion of lacunary sequences.

\begin{definition}[Lacunary sequence]
	A Lacunary sequence is a sequence of numbers $\lbrace x_{n}\rbrace_{n\in \mathbb{N}}$ such that $\frac{x_{n+1}}{x_{n}}\geqslant \lambda >1 \forall n\in \mathbb{N}$.
\end{definition}
Additive complements of lacunary sequences also has a long history. For a brief history see \cite[Section 1]{RuzsaLacunary}. However, the study of minimal additive complements and co-minimal pairs involving lacunary sequences is new.  Our first result concerns co-minimal pairs of lacunary sequences. To avoid introducing cumbersome notation from the beginning, we state a simplified version of Theorem \ref{Thm} below 

\begin{theoremIntro}
\label{ThmA}
     In the additive group of the integers, a ``majority'' of lacunary sequences have the property that they belong to a co-minimal pair. 
\end{theoremIntro}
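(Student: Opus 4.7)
The plan is to interpret the word ``majority'' as requiring the lacunarity ratio $\lambda$ to exceed an explicit threshold $\lambda_0 > 1$ (possibly supplemented by a regularity condition on the ratios $a_{n+1}/a_n$), and then, for each such lacunary sequence $A = \{a_n\}_{n \geq 1}$, to exhibit an explicit set $B \subseteq \mathbb{Z}$ such that $(A,B)$ is a co-minimal pair. The flexibility built into the construction should also make visible the abundance of such $B$'s alluded to in the abstract.

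The construction is to partition $\mathbb{Z}$ into intervals $\{I_n\}_{n \geq 1}$, with $I_n$ designated to be covered by translates of $a_n$, and to set
\[
B_n \;:=\; I_n - a_n, \qquad B \;:=\; \bigcup_{n \geq 1} B_n.
\]
For the positive side a natural choice is $I_n = (T_{n-1}, T_n]$ with $T_n \approx \lfloor (a_n + a_{n+1})/2 \rfloor$, so that $I_n$ lies close to $a_n$ and is narrow compared with $a_{n+1} - a_n$. The identity $A + B \supseteq \bigcup_n (a_n + B_n) = \bigcup_n I_n = \mathbb{Z}$ then holds by design.

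Co-minimality is checked via two dual arguments. For \emph{$A$-minimality}, in each $I_n$ single out a canonical element $z_n$ (for instance $z_n = a_n$) and show that $z_n$ has no representation $a_m + b$ with $m \neq n$ and $b \in B$; for otherwise $a_n - a_m = b$ would lie in some $B_k$, giving $a_n - a_m + a_k \in I_k$, and one checks that lacunarity (with $\lambda$ above the threshold) places $a_n - a_m + a_k$ too far from $a_k$ to sit in the narrow interval $I_k$. For \emph{$B$-minimality}, to each $b \in B_n$ associate the integer $z := a_n + b \in I_n$ and use exactly the same lacunary-separation argument to rule out any alternative representation $z = a_m + b'$ with $b' \in B \setminus \{b\}$. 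In both cases the quantitative estimates reduce to ruling out near-coincidences of the form $a_k + (a_n - a_m) \approx a_k$, which is exactly where the hypothesis on $\lambda$ is consumed.

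\textbf{Main obstacle.} The delicate point is the negative half-line: since every $a_n > 0$, covering arbitrarily negative integers forces $B$ to extend to $-\infty$, and the naive assignment of a single unbounded block $(-\infty, T_1]$ to $a_1$ would turn $B_1$ into a half-line, destroying $B$-minimality outright. The remedy is to distribute the negative integers among the larger $a_n$'s via an auxiliary assignment $n \mapsto m(n)$, attaching a negative block $J_n$ of controlled length to $a_{m(n)}$ and arranging $m(n)$ to grow rapidly enough that the corresponding pieces $J_n - a_{m(n)}$ remain pairwise disjoint, sparse, and each essential. Making this dispersal work uniformly across all lacunary $A$ with $\lambda > \lambda_0$, and carrying through the bookkeeping needed to verify both minimality conditions in the presence of the distributed negative tail, is the technical heart of the argument.
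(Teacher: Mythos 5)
Your overall strategy --- decompose $\mathbb{Z}$ into blocks, assign each block to a designated term $a_n$ of the sequence, and take $B$ to be the union of the translated blocks $I_n-a_n$ --- is exactly the architecture of the paper's proof (Theorem 2.1, parts (3) and (7)). But your specific assignment on the positive side fails. With $T_n\approx\lfloor(a_n+a_{n+1})/2\rfloor$ you get
$$B_n \;=\; I_n-a_n \;\approx\; \Bigl(-\tfrac{a_n-a_{n-1}}{2},\ \tfrac{a_{n+1}-a_n}{2}\Bigr],$$
an interval containing $0$ whose endpoints tend to $\mp\infty$; these intervals are essentially nested and increasing, so $B=\bigcup_n B_n$ swallows every integer (each fixed $j$ lies in $B_n$ for all large $n$). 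Then $B$ is nowhere near minimal, and every $z$ admits infinitely many representations $z=a_n+(z-a_n)$, so the ``canonical element with a unique representation'' needed for $A$-minimality does not exist either: $(A\setminus\{a_1\})+B=\mathbb{Z}$ still. The error is in the choice of which $a_n$ covers which block: covering the interval \emph{around} $a_n$ by $a_n$ itself pulls every block back to a neighbourhood of the origin, where they all collide. The paper avoids this by making the translated blocks pairwise disjoint and drifting to $-\infty$: the negative range $[-t_{n-1},-t_{n-2})$ is covered by $t_{n-1}$, placing its block near $-2t_{n-1}$, while the nonnegative integers are distributed one at a time over a very sparse subsequence $t_{m_k}$ (the integer $k$ is covered by $t_{m_k}$ via the block element $k-t_{m_k}$). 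This is precisely the ``dispersal'' device you correctly identify as necessary for the unbounded half-line --- but it is needed for the positives, not the negatives, and it is the lacunarity ratio ($\geq 6$, or $\geq 2$ with $t_n-2t_{n-1}\to\infty$) that keeps the dispersed blocks separated.

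A second, smaller gap: even with a correct block placement, not every element of the resulting $B$ is automatically essential, so one cannot conclude co-minimality merely by exhibiting, for each $b$, an integer it represents. The paper instead shows that every integer has only \emph{finitely} many representations $a+b$ and invokes a general extraction lemma (Lemma 3.3) to pass to a minimal subcomplement $\mathcal{G}\subseteq B$, then checks that $A$ remains a minimal complement to $\mathcal{G}$. You would need this extraction step, or a genuinely greedy construction of $B$ (as the paper carries out for its symmetric variants $\mathcal{E},\mathcal{F},\mathcal{P},\mathcal{Q}$), to close the argument.
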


By a ``majority'', we mean the following: we know that a lacunary sequence $\lbrace x_{n}\rbrace_{n\in \mathbb{N}}$ is defined by the growth condition $\frac{x_{n+1}}{x_{n}}\geqslant \lambda$, with $\lambda \in (1,+\infty)$. The lacunary sequences which satisfy Theorem \ref{ThmA} have $\lambda \in[6,+\infty)$ and in some cases even $\lambda \in (3,+\infty)$. Further, they can be generalised to $\mathbb{Z}^{d}$. See Theorem \ref{Thm} for the complete statement. It is worth mentioning that in \cite{CoMin2}, it has been established that the set consisting of the terms of the lacunary sequence $1, 2, 2^2, 2^3, \cdots$ is a part of a co-minimal pair.  

Next, we consider the subsets of $\bbZ$ of the following types. 
\begin{enumerate}[(Type 1)]
\item Symmetric subsets of $\bbZ$ containing the origin,
\item Symmetric subsets of $\bbZ$ not containing the origin,
\item Subsets of $\bbZ$ which are bounded from the above or from the below. 
\end{enumerate}
We investigate whether a subset of $\bbZ$ of the above types can be a part of a co-minimal pair with a subset of $\bbZ$ of the above types. We establish certain sufficient conditions for this to be true. In fact, we have the following result.

\begin{theoremIntro}
For (I, II) equal to any one of 
$$(1, 1), (1, 2), (1, 3), (2, 1), (2, 2), (2, 3), (3, 3),$$
there are uncountably many subsets of $\bbZ$ of Type I which form a co-minimal pair with a subset of $\bbZ$ of Type II.

\end{theoremIntro}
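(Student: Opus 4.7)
\medskip
\noindent\textbf{Proof plan.} The idea is to use the detailed form of Theorem~\ref{ThmA} (stated as Theorem~\ref{Thm} below), which supplies co-minimal pairs $(A, B)$ in $\bbZ$ whose first coordinate is the underlying set of a lacunary sequence with ratio $\lambda$ in an explicit admissible range. I will use such pairs as seeds, then modify them by symmetrisation, translation, and subsequence selection to land in each of the seven prescribed type combinations.

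For case $(3,3)$, I will take a lacunary sequence of positive integers $A = \{x_n\}_{n \in \bbN}$ and its co-minimal partner $B$ supplied by Theorem~\ref{Thm}. Then $A$ is automatically bounded below; if $B$ is not already bounded below as constructed, I replace $B$ by $B + k$ for a suitable integer $k$ (which preserves co-minimality with $A$), yielding a Type~$3$/Type~$3$ pair.

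For the six remaining cases, the central step is symmetrisation. Starting from $(A, B)$ as above, I form $\widetilde{A} := A \cup (-A)$ or $\widetilde{A}_0 := A \cup (-A) \cup \{0\}$, according to whether Type~2 or Type~1 is required for the first coordinate, and I choose a companion $\widetilde{B}$ of the matching symmetric shape built out of $B$ (with or without $\{0\}$ depending on the desired type of the second coordinate). Surjectivity $\widetilde{A} + \widetilde{B} = \bbZ$ is immediate from $A + B = \bbZ$. The main work is the minimality verification: for each element $a$ of $\widetilde{A}$ (respectively each element of $\widetilde{B}$), I need to exhibit an integer whose only representation as a sum from $\widetilde{A} + \widetilde{B}$ uses $a$. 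The mechanism is that, when $\lambda$ is taken sufficiently large, the sums $\pm x_m \pm x_n + b$ and $\pm x_m + b + b'$ remain well-separated, so the witnesses supplied by Theorem~\ref{Thm} for the unsymmetrised pair survive into the symmetrised pair without collision.

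Uncountability is obtained uniformly across all cases by exploiting the flexibility in the choice of the underlying lacunary sequence: fix a single fast-growing sequence $\{x_n\}$ with ratio strictly above the admissibility threshold; then for each infinite subset $S \subseteq \bbN$, the subsequence $\{x_n : n \in S\}$ is again lacunary with ratio at least $\lambda$, so the recipes above yield a distinct co-minimal pair of the prescribed type. Since there are uncountably many such $S$, the conclusion follows. The principal obstacle is the minimality check after symmetrisation, especially in cases $(1,2)$, $(2,1)$ and $(2,2)$, where the asymmetric handling of the origin on the two sides forces careful bookkeeping of which representations can collapse; this is controlled by tightening the admissible lower bound on $\lambda$ whenever needed.
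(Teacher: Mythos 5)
Your uncountability step (pass to the uncountably many subsequences of one fixed fast-growing sequence, each of which still satisfies the growth hypotheses) is exactly the paper's argument, and your treatment of the case $(3,3)$ via the lacunary set itself and its partner from Theorem~\ref{Thm}(7) is essentially correct (no translation is even needed: $\calT\subseteq\bbZ_{\geq 0}$ is bounded below and its partner $\calS\subseteq\bbZ\setminus\bbZ_{\geq 0}$ is bounded above). The problem is the central mechanism for the other six cases. Symmetrising a co-minimal pair $(A,B)$ with $A\subseteq\bbZ_{>0}$ lacunary and $B\subseteq\bbZ_{<0}$ does not merely risk ``collisions'' that a larger $\lambda$ can separate; it destroys minimality outright. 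If $\widetilde A=A\cup(-A)$ and $\widetilde B=B\cup(-B)$, then for any $b\in B$ we have $-B\subseteq\widetilde B\setminus\{b\}$, hence
$$\widetilde A+(\widetilde B\setminus\{b\})\supseteq(-A)+(-B)=-(A+B)=\bbZ,$$
so \emph{every} element of $\widetilde B$ is redundant (and symmetrically every element of $\widetilde A$ is redundant, using $A+B=\bbZ$). The same failure occurs in the one-sided cases: with $\widetilde A=A\cup(-A)\cup\{0\}$ and the original $B$, already $(\widetilde A\setminus\{0\})+B\supseteq A+B=\bbZ$, so $0$ and all of $-A$ are unnecessary. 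This is structural double coverage, not a separation issue, so no strengthening of the lacunarity constant repairs it.

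What is actually needed --- and what the paper does in Theorem~\ref{Thm}(1)--(6) --- is to build the partner of the symmetric set from scratch. The symmetric sets $\calW=\calT\cup\{0\}\cup(-\calT)$ and $\calV=\calT\cup(-\calT)$ are fixed in advance, and their partners $\calE,\calF,\calP,\calQ$ are defined by a greedy recursion of the form $\calE_n=\{-t_{n-1}\}+\bigl(\calI_{n-1}\setminus((\cup_{m<n}(\calE_m\cup(-\calE_m)))+\calW)\bigr)$, which only inserts an element when it covers a point not yet covered; minimality on both sides is then verified through the separation estimates of Proposition~\ref{Prop:M}. For the mixed cases $(1,3)$ and $(2,3)$ the paper keeps the Type-3 partner $G$ but \emph{prunes} it (removing $\{-2t_0\}$ and $\{-t_n-3t_{n-1}\}$) precisely so that the negative half $-\calT$ and the origin in $\calW$ become necessary, and then invokes Lemma~\ref{Lemma:Finiteness} to extract a genuinely minimal subset. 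Your proposal is missing this construction entirely, and it is where the whole content of the theorem lies.
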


Again, the above is a specialised version of a more general theorem which holds for $\mathbb{Z}^{d}$. See Theorem \ref{Thm:Uncountable}.

\section{Co-minimal pairs involving lacunary sequences and generalisations}

Let $d\geq 1$ be an integer. Denote the lexicographic order on $\bbZ^d$ by the symbol $<$. The element $(1, \cdots, 1)$ of $\bbZ^d$ is denoted by $\bfone$.
For $?\in \{<, \leq, > , \geq\}$ and $x\in \bbZ^d$, the set $\{n\in \bbZ^d\,|\, n?x\}$ is denoted by $\bbZ^d_{?x}$. 
Let $t_0 < t_1 < t_2 < \cdots$ be elements of $\bbZ^d$. Assume that $t_0 > 0$, 
$$t_n \geq 2 t_{n-1}
\quad
\text{ for all } 
n \geq 1,$$
and for some $n\geq 0$, all the coordinates of $t_n$ are positive. Let $\calT, \calV, \calW$ be subsets of $\bbZ^d$ defined by 
\begin{align*}
\calT 
& = \{t_0 , t_1, t_2, \cdots \},\\
\calV 
& = \calT \cup (-\calT),\\
\calW 
& = \calT \cup \{0\} \cup (-\calT).
\end{align*}

Then, the following holds,
\begin{theorem}
\label{Thm}
\quad
\begin{enumerate}
\item 
If
$$t_n > 3 t_{n-1}
\quad
\text{ for all } 
n \geq 1,$$
then the set 
$\calW$
and a symmetric subset $\calE$ of $\bbZ^d$ containing the origin form a co-minimal pair in $\bbZ^d$. 
\item 
If
$$t_n > 3 t_{n-1}
\quad
\text{ for all } 
n \geq 1,$$
then the set 
$\calW$
and a symmetric subset $\calF$ of $\bbZ^d$ not containing the origin form a co-minimal pair in $\bbZ^d$. 

\item 
If
$$t_n \geq 6 t_{n-1}
\quad
\text{ for all } 
n \geq 1,$$
then the set 
$\calW$
and a subset $\calG$ of $\bbZ^d\setminus \bbZ^d_{\geq 0}$ form a co-minimal pair in $\bbZ^d$.

\item 
If
$$t_n > 3 t_{n-1}
\quad
\text{ for all } 
n \geq 1,$$
then the set 
$\calV$
and a symmetric subset $\calP$ of $\bbZ^d$ containing the origin form a co-minimal pair in $\bbZ^d$. 
\item 
If
$$t_n > 3 t_{n-1} 
\quad
\text{ for all } 
n \geq 1,$$
then the set 
$\calV$
and a symmetric subset $\calQ$ of $\bbZ^d$ not containing the origin form a co-minimal pair in $\bbZ^d$. 
\item 
If
$$t_n \geq 6 t_{n-1}
\quad
\text{ for all } 
n \geq 1,$$
then the set 
$\calV$
and a subset $\calR$ of $\bbZ^d\setminus \bbZ^d_{\geq 0}$ form a co-minimal pair in $\bbZ^d$. 
\item 
If
$$t_n \geq 2 t_{n-1}
\quad
\text{ for all } 
n \geq 1,$$
then the set 
$\calT$
is a minimal complement to some subset of $\bbZ^d\setminus \bbZ^d_{\geq 0}$. 
In addition, if at least one coordinate of the sequence of points $\{t_n - 2t_{n-1}\}_{n\geq 1}$ goes to $\infty$, 
then the set 
$\calT$
and a subset $\calS$ of $\bbZ^d\setminus \bbZ^d_{\geq 0}$ form a co-minimal pair in $\bbZ^d$.

\end{enumerate}
\end{theorem}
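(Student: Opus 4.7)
The plan is to prove each of the seven assertions by an explicit stage-by-stage construction of the required complement ($\calE, \calF, \calG, \calP, \calQ, \calR$ or $\calS$) together with a pre-declared list of \emph{witnesses}. For any pair $(A, B)$ with $A + B = \bbZ^d$, co-minimality is equivalent to exhibiting, for each $a \in A$ and each $b \in B$, a uniquely representable element: an $h \in \bbZ^d$ whose only decomposition as an element of $A + B$ uses $a$ (respectively $b$). The construction will supply such witnesses by choosing them in advance for elements of $\calW, \calV$ or $\calT$ (finitely described via the lacunary sequence $\{t_n\}$) and by generating them on the fly for each newly added element of the complement.

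For part (1), I would begin by declaring $0 \in \calE$ and selecting, for each $n \geq 0$, a witness $w_n \in \bbZ^d$ for $t_n$ and its negative $-w_n$ for $-t_n$, with $0$ itself serving as the witness for $0 \in \calW$. The lacunary bound $t_n > 3 t_{n-1}$ makes it possible to place the $w_n$ so widely separated that no sum from $\calW$ and the so-far-committed elements of $\calE$ can hit two witnesses at once. I then enumerate $\bbZ^d$ and, for each $g$ not yet covered, add a symmetric pair $\{e, -e\}$ with $e = g - w$ for an admissible $w \in \calW$, attaching $g$ itself as the witness for $e$. Parts (2), (4), (5) follow by the same skeleton: in (2) we omit $0$ from the complement but compensate by covering $0 \in \bbZ^d$ through some new pair involving $\pm t_n$; in (4)–(5) we replace $\calW$ by $\calV$ and re-balance the witnesses to account for the missing origin. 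Parts (3) and (6) lose the symmetry freedom because the complement must lie in $\bbZ^d \setminus \bbZ^d_{\geq 0}$, and this is why one needs the sharper hypothesis $t_n \geq 6 t_{n-1}$: doubling the multiplicative gap compensates for halving the directional freedom when producing witnesses. Part (7) has two statements: the minimal-complement assertion (under $t_n \geq 2 t_{n-1}$) comes from the same construction, dropping the witness requirement for individual elements of $\calT$; the co-minimality upgrade requires individual witnesses for every $t_n \in \calT$, and the unboundedness of some coordinate of $t_n - 2 t_{n-1}$ is exactly the room needed to place infinitely many pairwise non-interfering such witnesses.

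The main obstacle will be the interference introduced by the symmetric or cone constraint on the complement: adding a single element $e$ forces simultaneously adding $-e$ in the symmetric parts, and the combined coverage $\{e,-e\} + \calW$ may threaten previously reserved witnesses. The heart of each part is therefore an admissibility lemma asserting that, whenever a new $g \in \bbZ^d$ needs to be covered, the lacunary hypothesis guarantees there is always an admissible choice of $e$ (or of $\{e,-e\}$) such that the resulting new sums collide with none of the finitely many currently reserved witnesses. The thresholds $3$ and $6$ in the hypotheses correspond precisely to how many witness-threatening sums a single step can create: the symmetric cases gain a factor-of-two of maneuvering room and so tolerate the weaker bound $t_n > 3 t_{n-1}$, while the cone-restricted cases require $t_n \geq 6 t_{n-1}$. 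Once the admissibility lemma is in place, verifying the three co-minimality axioms reduces to a direct comparison against the witness list built along the induction.
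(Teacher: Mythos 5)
Your overall strategy (a greedy enumeration of $\bbZ^d$ with pre-reserved witnesses for the elements of $\calW$, $\calV$ or $\calT$, and on-the-fly witnesses for each newly added element of the complement) is a legitimate alternative in outline to the paper's explicit layered construction, and your reduction of co-minimality to the existence of uniquely representable elements is correct. But the entire mathematical content is concentrated in your unproved ``admissibility lemma,'' and as stated it is not obviously true. When you add a symmetric pair $\{e,-e\}$, the new sums $\{e,-e\}+\calW$ form an \emph{infinite} set, which must avoid the \emph{infinitely many} witnesses $\{\pm w_n\}$ reserved at the outset (each witness $w_n$ forbids $e$ from the countable set $\pm\bigl(w_n-(\calW\setminus\{t_n\})\bigr)$), as well as all witnesses attached to previously inserted elements; meanwhile $e$ is constrained to lie in the countable set $g-\calW$. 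Showing that one countable set is not swallowed by the other requires a quantitative use of lacunarity, and this is precisely what the paper's Proposition \ref{Prop:M} supplies, via the annuli $\calI_n=\calX_{-t_n,-t_{n-1}}$ and the placement of the $n$-th layer of the complement inside $-t_{n-1}+\calI_{n-1}$. Your heuristic that the constants $3$ and $6$ count ``how many witness-threatening sums a single step can create'' is not an estimate and does not match how these constants actually function: the $3$ arises because the sets $\bbZ^d\setminus\bbZ^d_{\geq -t_n}$ and $\bbZ^d_{\geq -3t_{n-1}}$ must leave a gap inside $\calI_n$, and the $6$ arises from a further loss of roughly $t_{n-1}$ when the complement is confined to $\bbZ^d\setminus\bbZ^d_{\geq 0}$ yet must still cover the positive cone.

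Two further concrete problems. First, in parts (3), (6) and (7) you cannot simply ``generate witnesses on the fly'': covering $\bbZ^d_{\geq 0}$ by a set disjoint from $\bbZ^d_{\geq 0}$ forces the complement to contain points within bounded distance of $-t_n$ for infinitely many $n$, and you give no mechanism for attaching a private witness to each such point at the moment of insertion. The paper instead builds a non-minimal complement $G$, proves that every element of $\bbZ^d$ has only finitely many representations in $G+\calW$, and extracts a minimal subcomplement via Lemma \ref{Lemma:Finiteness}; some such extraction device (or a genuinely new argument) is needed in your approach as well. Second, your reading of part (7) is inverted: the first assertion is that $\calT$ itself is a minimal complement to some set, so it is exactly the elements of $\calT$ that require witnesses there (the elements of the covering set need none), and the extra hypothesis on $t_n-2t_{n-1}$ is what allows witnesses for the covering set in the co-minimality upgrade. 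As it stands the proposal is a plan whose key lemma is missing, not a proof.
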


\begin{theorem}
\label{Thm:UncountableSubsets}
Any infinite subset of any finitely generated abelian group has uncountably many subsets which is a part of a co-minimal pair. In particular, any infinite subset of $\bbZ^d$ has uncountably many subsets which admit minimal complements.
\end{theorem}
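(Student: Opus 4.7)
The plan is to derive this from Theorem \ref{Thm}(7), which asserts that a lexicographically increasing sequence $\calT=\{t_0,t_1,\dots\}$ in $\bbZ^d_{>0}$ with $t_n\geq 2 t_{n-1}$ and some coordinate of $t_n-2t_{n-1}$ tending to $\infty$ forms a co-minimal pair with some subset of $\bbZ^d$. The strategy has three stages: produce at least one such lacunary set inside any infinite $S\subseteq \bbZ^d$; pass from one to uncountably many by taking infinite sub-subsequences; and handle the torsion of a general finitely generated abelian group by pigeonhole together with a direct lifting.

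For the first stage, I would take an infinite $S\subseteq \bbZ^d$, enumerate distinct points $s_1,s_2,\dots$ of $S$, and by compactness of $[-\infty,+\infty]^d$ pass to a subsequence along which each coordinate has a limit in $[-\infty,+\infty]$, with at least one limit equal to $\pm\infty$. Applying the sign-flip automorphism of $\bbZ^d$ on those coordinates whose limit is $-\infty$, then translating so that every coordinate having a finite limit ends up with a positive limit, and finally permuting coordinates to place a coordinate with limit $+\infty$ first, produces (via automorphisms and translations, each of which transports the co-minimal pair property) a subsequence whose first coordinate tends to $+\infty$ and all of whose coordinates are eventually positive. I would then inductively choose indices $n_0<n_1<\cdots$ so that $t_k:=s_{n_k}$ satisfies $t_k[1]\geq 6 t_{k-1}[1]+1$; this forces $t_k\geq 6 t_{k-1}$ lexicographically, keeps all coordinates of every $t_k$ positive, and makes the first coordinate of $t_k-2t_{k-1}$ tend to infinity, so the hypotheses of Theorem \ref{Thm}(7) hold for $\calT=\{t_k\}$. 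Every infinite subset $\calT'\subseteq \calT$ inherits these hypotheses (thinning only accelerates the growth), so $\calT'$ is again part of a co-minimal pair; since $\bbN$ has $2^{\aleph_0}$ infinite subsets, this yields uncountably many distinct subsets of the original $S$ (after transporting back through the automorphism, permutation, and translation) each forming a co-minimal pair.

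For a general finitely generated abelian group $G\cong \bbZ^d\oplus T$ with $T$ finite, pigeonhole produces a coset $\bbZ^d\oplus\{\tau\}$ meeting the infinite set $S$ in an infinite set, and translating by $-\tau$ gives an infinite $S_0\subseteq \bbZ^d$. The previous stage furnishes uncountably many $A_\alpha\subseteq S_0$ forming co-minimal pairs $(A_\alpha,B_\alpha)$ in $\bbZ^d$. A short direct check shows that, back in $G$, the pair $(A_\alpha+\tau,\,B_\alpha\oplus T)$ is co-minimal: the sumset is clearly all of $G$; deleting any element of $A_\alpha+\tau$ breaks the $\bbZ^d$-projection of the sumset on every $T$-fibre at once, while deleting a specific $(b,t)\in B_\alpha\oplus T$ leaves the $\bbZ^d$-projection over the fibre $\tau+t$ equal to $A_\alpha+(B_\alpha\setminus\{b\})\subsetneq \bbZ^d$. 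The last clause of the theorem is immediate, since in any co-minimal pair $(A,B)$ the set $B$ is a minimal complement to $A$. I expect the main obstacle to be verifying the ``at least one $t_n$ has all coordinates positive'' hypothesis of Theorem \ref{Thm}, which is precisely what forces the compactness/flip/translate/permute reduction in the first stage.
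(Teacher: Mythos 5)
Your proposal is correct and follows essentially the same route as the paper: pigeonhole on the torsion part, normalize by translations and automorphisms so that an infinite subset contains a lexicographically $6$-lacunary sequence with positive coordinates, apply Theorem \ref{Thm}(7) to every infinite subsequence, and conclude by counting the uncountably many infinite subsets of $\bbN$. The only notable differences are that you keep all $d$ coordinates (controlling only the first) rather than projecting to the unbounded coordinates as the paper does, and that you give an explicit co-minimal lift $(A_\alpha+\tau,\,B_\alpha\oplus T)$ back to $G$ --- a step the paper leaves implicit.
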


It turns out that there are plenty of subsets of $\bbZ^d$ each of which is a part of a co-minimal pair. 
\begin{theorem}
\label{Thm:Uncountable}
The set of integers $\bbZ^d$ has uncountably many 
\begin{enumerate}
\item 
symmetric subsets containing the origin, each of which forms a co-minimal pair together with a symmetric subset of $\bbZ^d$ containing the origin,
\item 
symmetric subsets containing the origin, each of which forms a co-minimal pair together with a symmetric subset of $\bbZ^d$ not containing the origin,
\item 
symmetric subsets containing the origin, each of which forms a co-minimal pair together with a subset of $\bbZ^d$ which avoids $\bbZ^d_{\geq 0}$,

\item 
symmetric subsets not containing the origin, each of which forms a co-minimal pair together with a symmetric subset of $\bbZ^d$ containing the origin,
\item 
symmetric subsets not containing the origin, each of which forms a co-minimal pair together with a symmetric subset of $\bbZ^d$ not containing the origin,
\item 
symmetric subsets not containing the origin, each of which forms a co-minimal pair together with a subset of $\bbZ^d$ which avoids $\bbZ^d_{\geq 0}$,

\item 
subsets contained in $\bbZ^d_{\geq 0}$, each of which forms a co-minimal pair together with a subset of $\bbZ^d$ which avoids $\bbZ^d_{\geq 0}$.
\end{enumerate}
\end{theorem}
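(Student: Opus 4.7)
The plan is to deduce Theorem \ref{Thm:Uncountable} from Theorem \ref{Thm} by exhibiting an uncountable family of admissible lacunary sequences in $\bbZ^d$ whose associated sets $\calT$, $\calV$, $\calW$ are pairwise distinct. Since each admissible sequence already produces, via Theorem \ref{Thm}, co-minimal pairs of all seven types, this will immediately yield uncountably many sets of each of the required kinds.

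First I would fix a sufficiently fast base sequence, for example $b_n := 10^n \bfone \in \bbZ^d$, and for each binary string $\epsilon = (\epsilon_1, \epsilon_2, \ldots) \in \{0,1\}^{\bbN}$ define
\[
t_0(\epsilon) := b_0, \qquad t_n(\epsilon) := b_n + \epsilon_n \bfone \quad\text{for } n \geq 1.
\]
A routine check shows that for every $\epsilon$, the sequence $\{t_n(\epsilon)\}_{n\geq 0}$ has every coordinate positive, satisfies $t_n(\epsilon) \geq 6 t_{n-1}(\epsilon)$ in the lexicographic order for each $n \geq 1$ (indeed the first coordinate of $t_n(\epsilon) - 6t_{n-1}(\epsilon)$ is positive), and the first coordinate of $t_n(\epsilon) - 2 t_{n-1}(\epsilon)$ tends to $+\infty$. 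Consequently the hypothesis of every part of Theorem \ref{Thm} is met simultaneously by each such $\epsilon$.

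Next I would argue that distinct binary strings produce distinct sets $\calT(\epsilon)$: if $\epsilon_m \neq \epsilon'_m$, then $t_m(\epsilon)$ and $t_m(\epsilon')$ both lie in the window $\{b_m, b_m + \bfone\}$, which by the rapid growth of $b_n$ is disjoint from every other term of both sequences; hence at least one of these two points lies in $\calT(\epsilon)\triangle \calT(\epsilon')$. It follows that the sets $\calV(\epsilon) = \calT(\epsilon) \cup (-\calT(\epsilon))$ and $\calW(\epsilon) = \calT(\epsilon) \cup \{0\} \cup (-\calT(\epsilon))$ are also pairwise distinct across $\epsilon \in \{0,1\}^\bbN$. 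This yields uncountably many distinct symmetric subsets containing the origin (the $\calW(\epsilon)$'s), uncountably many symmetric subsets not containing the origin (the $\calV(\epsilon)$'s), and uncountably many subsets contained in $\bbZ^d_{\geq 0}$ (the $\calT(\epsilon)$'s).

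Finally, applying parts (1)--(7) of Theorem \ref{Thm} to each of these sets produces, for every $\epsilon$, a companion set of the required type completing a co-minimal pair, which proves all seven assertions at once. I do not anticipate any real obstacle here: the analytic and combinatorial heart of the matter is already contained in Theorem \ref{Thm}, and the only new ingredient is the continuum-many-parameter construction above, whose verification is elementary.
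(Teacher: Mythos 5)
Your argument is correct and follows essentially the same route as the paper: both reduce the statement to Theorem \ref{Thm} by exhibiting an uncountable family of sequences satisfying all of its hypotheses simultaneously and observing that distinct sequences give distinct sets $\calT$, $\calV$, $\calW$. The only difference is cosmetic --- the paper takes the uncountably many subsequences of the single sequence $x_n=(6^n,\dots,6^n)$, whereas you perturb a fixed fast-growing sequence by binary strings; both verifications are routine.
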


\section{Proofs}
For two points $P, Q\in \bbZ^d$ satisfying $P\leq Q$, let $\calX_{P, Q}$ denote the subset of $\bbZ^d$ defined by 
$$
\calX_{P, Q}
=
\bbZ^d_{\geq P}
\setminus 
\bbZ^d_{\geq Q}
.
$$
\begin{lemma}
\label{Lemma:SumBound}
Let $P, Q$ be points in $\bbZ^d$ satisfying $P \leq Q$, and $A$ be a nonempty subset of $\bbZ^d$. For any $v\in \bbZ^d$, the inclusion 
$$\calX_{P, Q} + A_{\leq v} \subseteq 
\bbZ^d
\setminus 
\bbZ^d_{\geq Q+v} $$
holds. 
\end{lemma}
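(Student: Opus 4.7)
The plan is to unpack the definitions and then exploit the fact that the lexicographic order on $\bbZ^d$ is a total, translation-invariant order. Concretely, $\calX_{P,Q}$ consists of those $n\in\bbZ^d$ with $n\geq P$ but $n\not\geq Q$; since $<$ is a total order on $\bbZ^d$, the condition $n\not\geq Q$ is equivalent to $n<Q$. Similarly, an element $a\in A_{\leq v}$ satisfies $a\leq v$. So the lemma reduces to showing that whenever $x<Q$ and $a\leq v$, we have $x+a<Q+v$, because this is exactly saying $x+a\notin\bbZ^d_{\geq Q+v}$.

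To establish $x+a<Q+v$, I would invoke translation invariance of the lex order twice. From $x<Q$, adding $a$ to both sides gives $x+a<Q+a$; from $a\leq v$, adding $Q$ to both sides gives $Q+a\leq Q+v$. Chaining these yields
\[
x+a \;<\; Q+a \;\leq\; Q+v,
\]
so $x+a<Q+v$, i.e.\ $x+a\in\bbZ^d\setminus \bbZ^d_{\geq Q+v}$.

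There is no genuine obstacle here: the content of the lemma is entirely bookkeeping about the lex order. The only point worth recording explicitly is the translation invariance of $<$ on $\bbZ^d$, which follows at once from the definition of the lexicographic order (the first coordinate where two tuples differ is unchanged after adding a common vector). Once this is noted, the chain of inequalities above finishes the argument in one line, and taking the union over all $x\in\calX_{P,Q}$ and $a\in A_{\leq v}$ gives the claimed set inclusion.
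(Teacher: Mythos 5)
Your proof is correct and uses the same essential idea as the paper, namely the translation invariance of the lexicographic order (the paper writes $\calX_{P,Q}+a=\calX_{P+a,Q+a}\subseteq \bbZ^d\setminus\bbZ^d_{\geq Q+a}\subseteq \bbZ^d\setminus\bbZ^d_{\geq Q+v}$ for each $a\leq v$ and takes the union). Your element-wise phrasing, using totality of the order to rewrite $n\not\geq Q$ as $n<Q$ and then chaining $x+a<Q+a\leq Q+v$, is just a cleaner way of organizing the same computation.
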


\begin{proof}
Note that 
\begin{align*}
\calX_{P, Q} + A_{\leq v} 
& \subseteq 
\cup _{a\in A , a \leq v} (\calX_{P, Q} +a)\\
& \subseteq 
\cup _{a\in A , a \leq v} \calX_{P+a, Q+a}\\
& \subseteq 
(\cup _{a\in A , a \leq v} \bbZ^d_{\geq P+a} )
\setminus 
(\cup _{a\in A , a \leq v} \bbZ^d_{\geq Q+a} )\\
& \subseteq 
(\cup _{a\in A , a \leq v} \bbZ^d_{\geq P+a} )
\setminus 
\bbZ^d_{\geq Q+v} \\
& \subseteq 
\bbZ^d
\setminus 
\bbZ^d_{\geq Q+v} .
\end{align*}
\end{proof}

Set 
$$t_{-2} = t_{-1} = 0.$$
Define the subsets $\{\calI_n\}_{n\geq 0}$ of $\bbZ^d$ as follows. 
$$\calI_n 
= 
\calX_{-t_n, -t_{n-1} }.
$$
Consider the subsets $\{\calJ_n\}_{n\geq 0}$ of $\bbZ^d$ defined by 
$$\calJ_n 
= 
\calX_{-t_n, -t_{n-1} -t_{n-2}}. 
$$
These subsets make sense since $t_n \geq 2 t_{n-1}$ holds for $n\geq 0$. 
Note that for $n\geq 0$,
$$
-t_{n-1} - t_{n-2}
\leq 
-t_{n-1}
$$
holds, this implies 
$$
\bbZ^d_{\geq -t_{n-1} - t_{n-2} }
\supseteq 
\bbZ^d_{\geq -t_{n-1}} ,
$$
which yields 
$$\calJ_n \subseteq \calI_n.$$
Since all the coordinates of $t_n$ are positive for some $n\geq 0$, we obtain $-t_n \to (-\infty, \cdots, -\infty)$. Thus it follows that 
$$\cup _{n\geq 0} \calI_n = \bbZ^d \setminus \bbZ^d_{\geq 0}.$$

\begin{proposition}
\label{Prop:M}
\quad
\begin{enumerate}
\item 
Each of the sets
$$
\cup _{m\geq n+2} \calJ_m + \calW
,
\calJ_{n+1} + (\calW\setminus \{t_n\})$$
contains no point of $\calI_n$ for all $n\geq 0$. 
\item
For any $n\geq 1$, the inclusion
$$
(\calJ_0 \cup \cdots \cup (\calJ_n\cap \bbZ^d_{\geq -2t_{n-1}})) + \calW 
\subseteq 
(\bbZ^d\setminus \bbZ^d_{\geq -t_n})
\cup 
\bbZ^d_{\geq -3t_{n-1}}
$$
holds. 
\item
If 
$$t_n > 3 t_{n-1}
\quad
\text{ for all } 
n \geq 1,$$
then the set 
$$
\cup _{m\geq n+1}(- (\calJ_m\cap \bbZ^d_{\geq -2t_{m-1} })) + \calW
$$
contains no point of $\calI_n$ for all $n\geq 0$.
\item
For any $n\geq 1$, the inclusion
$$
(-(\calJ_1 \cup \cdots \cup \calJ_n)) + (\calW \setminus \{-t_n\})
\subseteq 
\bbZ^d_{\leq -2t_n}
\cup 
(\bbZ^d \setminus \bbZ^d_{\leq -t_{n-1}})
$$
holds if 
$$t_n \geq 3 t_{n-1}
\quad
\text{ for all } 
n \geq 1.$$
\end{enumerate}
\end{proposition}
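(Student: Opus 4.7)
The plan is to prove all four parts by a case analysis on $w\in\calW$, split according to whether $w=0$, $w=t_\ell$, or $w=-t_\ell$ for some $\ell\geq 0$. In each branch the argument reduces to a one-dimensional interval inequality, fed by the defining bounds
$$-t_m \leq j < -t_{m-1}-t_{m-2}$$
for $j\in\calJ_m$ (with the conventions $t_{-1}=t_{-2}=0$) together with the relevant growth hypothesis on $\{t_n\}$. Since the lexicographic order on $\bbZ^d$ is total, these estimates behave like inequalities in $\bbZ$, and Lemma~\ref{Lemma:SumBound} (or its elementwise reformulation) underwrites them.

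For Part (1), I would fix $j\in\calJ_m$, $w\in\calW$, and rule out $j+w\in\calI_n$. When $w\leq 0$ one has $j+w\leq j<-t_{m-1}-t_{m-2}\leq -t_n$ once $m\geq n+2$; when $w=t_\ell$ with $\ell\leq m-1$, $j+w<-t_{m-2}\leq -t_n$; and when $w=t_\ell$ with $\ell\geq m$, $j+w\geq -t_m+t_\ell\geq 0\geq -t_{n-1}$. For the second assertion the same case split with $m=n+1$ isolates $w=t_n$ as the only element of $\calW$ for which $\calJ_{n+1}+w$ can meet $\calI_n$, so removing $t_n$ suffices. Part (2) is the same style of analysis but seeks a lower bound on $j+w$ under the hypothesis $j+w\geq -t_n$: for $w\geq 0$ the bound $j\geq -t_k\geq -t_{n-1}$ is enough for $k\leq n-1$, while the extra restriction $j\geq -2t_{n-1}$ handles $k=n$; for $w=-t_\ell$ the hypothesis $j+w\geq -t_n$ forces $\ell\leq n-1$, so $w\geq -t_{n-1}$ and the worst case gives $j+w\geq -2t_{n-1}-t_{n-1}=-3t_{n-1}$.

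Parts (3) and (4) are the mirror arguments applied to $-j+w$. For Part (3), suppose $-j+w\in\calI_n$ with $j\in\calJ_m\cap\bbZ^d_{\geq -2t_{m-1}}$ and $m\geq n+1$. The branches $w\geq 0$ give $-j+w>0\geq -t_{n-1}$, so $w=-t_\ell$. The requirement $-j-t_\ell<-t_{n-1}$ combined with $-j>t_{m-1}+t_{m-2}$ yields $t_\ell>t_{m-1}$, hence $\ell\geq m$; but $-j-t_\ell\geq -t_n$ combined with $-j\leq 2t_{m-1}$ and $t_n\leq t_{m-1}$ yields $t_\ell\leq 3t_{m-1}<t_m$ via the hypothesis $t_m>3t_{m-1}$, hence $\ell<m$: contradiction. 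Part (4) runs in parallel: the same case split forces $w=-t_\ell$, and the two bounds — now combined with $t_{n+1}\geq 3t_n$ — pin $\ell=n$ down as the only possibility, so deleting $-t_n$ from $\calW$ breaks the intersection.

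The main obstacle is keeping the indices $k, m, n, \ell$ straight while invoking the right growth hypothesis in each of roughly a dozen sub-cases. Each reduces to an elementary interval inequality, but the hypotheses are calibrated tightly: the $3t_{m-1}<t_m$ step in Part~(3) is precisely where $t_m>3t_{m-1}$ (rather than the default $t_m\geq 2t_{m-1}$) is needed, and similarly the $3t_n\leq t_{n+1}$ step in Part~(4). The small cases ($k=0,1$ and $n=0$) also require a little care because of the conventions $t_{-1}=t_{-2}=0$, but once these are tracked the arguments are routine.
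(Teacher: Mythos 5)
Your proposal is correct and follows essentially the same route as the paper: partition $\calW$ into the ranges $w\le 0$, $w=t_\ell$ with $\ell$ below/at/above the critical index, and reduce each case to an interval inequality for the total, translation-invariant lexicographic order, invoking $t_m>3t_{m-1}$ (resp.\ $t_{n+1}\ge 3t_n$) exactly where the paper does in parts (3) and (4). The only difference is presentational — you run parts (2)--(4) contrapositively to pin down the index $\ell$, whereas the paper states the same bounds as set inclusions via Lemma~\ref{Lemma:SumBound} — and the boundary conventions $t_{-1}=t_{-2}=0$ and the shift from $\ell\le m-1$ to $\ell\le m-2$ in the second assertion of part (1) are handled as you indicate.
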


\begin{proof}
For $n\geq 0$, the inclusions 
\begin{align*}
\calJ_{m} + \calW_{\leq t_{m-2}}
& \subseteq 
\calX_{-t_m, - t_{m-1} - t_{m-2} } + \calW_{\leq t_{m-2}}\\
& \subseteq 
\bbZ^d \setminus \bbZ^d_{\geq - t_{m-1} - t_{m-2} + t_{m-2} }\\
& \subseteq 
\bbZ^d \setminus \bbZ^d_{\geq - t_{m-1}}\\
& \subseteq 
\bbZ^d \setminus \bbZ^d_{\geq - t_n}\\
\end{align*}
hold for $m\geq n+1$ (the second inclusion follows from Lemma \ref{Lemma:SumBound}), 
the inclusions 
\begin{align*}
\calJ_{m} + t_{m-1}
& \subseteq 
\calX_{-t_m, - t_{m-1} - t_{m-2} } + t_{m-1}\\
& = 
\calX_{-t_m+t_{m-1}, - t_{m-1} - t_{m-2} + t_{m-1}}\\
& = 
\calX_{-t_m+t_{m-1}, - t_{m-2} }\\
& \subseteq 
\calX_{-t_m+t_{m-1}, - t_n }
\end{align*}
hold for $m\geq n+2$, 
the inclusions
\begin{align*}
\calJ_{m} + \calW_{\geq t_m}
& \subseteq
\bbZ^d_{\geq - t_{m}} + \calW_{\geq t_m}\\
& =
\cup _{r\geq m} \bbZ^d_{\geq - t_{m} + t_r}\\
& \subseteq
\bbZ^d_{\geq - t_{m} + t_{m}}\\
& \subseteq 
\bbZ^d_{\geq 0}
\end{align*}
hold for $m\geq n+1$. This proves part (1). 

For $n\geq 1$, the inclusions
\begin{align*}
(\calJ_0 \cup \cdots \cup \calJ_n) + \calW_{\leq -t_n}
& \subseteq 
\calX_{-t_n, 0} +\calW_{\leq -t_n}\\
& \subseteq 
\bbZ^d\setminus \bbZ^d_{\geq - t_n}
\end{align*}
hold (the second inclusion follows from Lemma \ref{Lemma:SumBound}), 
the inclusions
\begin{align*}
(\calJ_0 \cup \cdots \cup (\calJ_n\cap \bbZ^d_{\geq -2t_{n-1}})) + \calW_{\geq -t_{n-1}}
& \subseteq 
\bbZ^d_{\geq -2t_{n-1}  } + \calW_{\geq -t_{n-1}}\\
& \subseteq 
\bbZ^d_{\geq -3t_{n-1}}
\end{align*}
hold for $r\leq n-1$. This proves part (2). 

For $n\geq 0$, $m\geq n+1$, the inclusions 
\begin{align*}
(- (\calJ_m\cap \bbZ^d_{\geq -2t_{m-1} })) - t_r
& \subseteq 
\bbZ^d_{\leq 2t_{m-1}  } - t_r \\
& \subseteq 
\bbZ^d_{\leq 2t_{m-1}  -t_m }\\
& \subseteq 
\bbZ^d_{< - t_{m-1}} \\
& \subseteq 
\bbZ^d_{< - t_n } 
\end{align*}
hold for $r\geq m$, 
the inclusions
\begin{align*}
(-\calJ_m) - t_r
& \subseteq 
(
\bbZ^d_{\leq t_m} 
\setminus 
\bbZ^d_{\leq t_{m-1}}
)- t_r \\
& \subseteq 
(
\bbZ^d
\setminus 
\bbZ^d_{\leq t_{m-1}}
)- t_r \\
& = 
\bbZ^d
\setminus 
\bbZ^d_{\leq t_{m-1}-t_r}\\
& \subseteq 
\bbZ^d
\setminus 
\bbZ^d_{\leq t_{m-1}-t_{m-1}}\\
& \subseteq 
\bbZ^d
\setminus 
\bbZ^d_{\leq 0}
\end{align*}
hold for $r\leq m-1$. This proves part (3). 

For $n\geq 1$, the inclusions 
\begin{align*}
 -(\calJ_1 \cup \cdots \cup \calJ_n) - t_r
& \subseteq 
\bbZ^d \setminus \bbZ^d_{\leq t_0} -t_r\\
& = 
\bbZ^d \setminus \bbZ^d_{\leq t_0 -t_r}\\
& \subseteq 
\bbZ^d \setminus \bbZ^d_{\leq t_0 -t_{n-1}}\\
& \subseteq 
\bbZ^d \setminus \bbZ^d_{\leq -t_{n-1}}
\end{align*}
hold for $r\leq n-1$, 
the inclusions 
\begin{align*}
 -(\calJ_1 \cup \cdots \cup \calJ_n) - t_r
& \subseteq 
\bbZ^d_{\leq t_n } - t_r\\
& \subseteq 
\bbZ^d_{\leq t_n -  t_{n+1}}\\
& \subseteq 
\bbZ^d_{\leq -2t_n }\\
\end{align*}
hold for $r\geq n+1$. This proves part (4). 
\end{proof}

\begin{lemma}
\label{Lemma:Finiteness}
Let $S$ and $T$ be nonempty subsets of an abelian group $G$ such that $S + T = G$. If the set $S$ is countable, and each element of $G$ can be expressed as a sum of an element of $S$ and an element of $T$ only in finitely many ways, then some nonempty subset of $S$ is a minimal complement to $T$. 
\end{lemma}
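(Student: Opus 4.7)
The plan is to exploit the countability of $S$ together with the finite-representation hypothesis via a greedy removal procedure. Enumerate $S = \{s_1, s_2, s_3, \ldots\}$ and build a decreasing chain $S = S_0 \supseteq S_1 \supseteq S_2 \supseteq \cdots$ by setting
\[
S_n =
\begin{cases}
S_{n-1} \setminus \{s_n\} & \text{if } (S_{n-1} \setminus \{s_n\}) + T = G,\\
S_{n-1} & \text{otherwise.}
\end{cases}
\]
Let $S^\star = \bigcap_{n \geq 0} S_n$; equivalently, $s_n \in S^\star$ if and only if $s_n$ survived its own stage. I claim $S^\star$ is the desired minimal complement of $T$.

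The main technical step is to show $S^\star + T = G$. Fix $g \in G$, and set $R_g = \{s \in S \mid g - s \in T\}$. By the finite-representation hypothesis, $R_g$ is finite; enumerate it as $\{s_{i_1}, \ldots, s_{i_k}\}$ with $i_1 < \cdots < i_k$. Suppose for contradiction that $R_g \cap S^\star = \emptyset$. Then each $s_{i_j}$ was removed at stage $i_j$, so by the time stage $i_k$ is completed, every element of $R_g$ is already absent from $S_{i_k}$. But the criterion that permitted the removal of $s_{i_k}$ requires $S_{i_k} + T = G$, which demands some representation $g = s + t$ with $s \in S_{i_k}$, $t \in T$; such an $s$ would have to lie in $R_g \cap S_{i_k}$, a contradiction. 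Hence $g$ admits a representation using $S^\star$ and $T$.

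Minimality is then immediate: for any $s_n \in S^\star$, the fact that $s_n$ was \emph{not} removed means $(S_{n-1} \setminus \{s_n\}) + T \neq G$, so some $g_n \in G$ has no representation in $(S_{n-1} \setminus \{s_n\}) + T$; since $S^\star \setminus \{s_n\} \subseteq S_{n-1} \setminus \{s_n\}$, the same $g_n$ witnesses $(S^\star \setminus \{s_n\}) + T \neq G$. Nonemptiness of $S^\star$ follows from $S^\star + T = G \neq \emptyset$.

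The step I expect to be the main obstacle is precisely the verification that $S^\star + T = G$: a greedy process over an infinite enumeration could, a priori, peel off every representation of some element in the limit, leaving that element uncovered. The finiteness-of-representations hypothesis is exactly what rules this out, because it confines all of the $s_i$ relevant to a given $g$ to a finite interval of indices, making the contradiction ``at stage $i_k$ every witness for $g$ has already been discarded'' rigorous. Without this hypothesis one would be forced into a Zorn's lemma argument, where control over the limit of a transfinite chain of complements is considerably more delicate.
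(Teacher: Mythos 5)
Your proof is correct and follows essentially the same route as the paper: the identical greedy removal procedure over an enumeration of $S$, with the finiteness-of-representations hypothesis used to show the intersection $S^\star$ is still a complement. The only (cosmetic) difference is in that verification step, where the paper pigeonholes a single representation of $g$ recurring in infinitely many stages, while you derive a contradiction at the last stage at which a witness for $g$ is removed; both arguments are valid.
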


\begin{proof}
The lemma follows when $S$ is finite. 

Let us consider the case when $S$ is infinite. Let $s_1, s_2, \cdots$ be elements of $G$ such that $S = \{s_1, s_2, \cdots\}$. Define $S_1 = S$ and for each positive integer $i\geq 1$, define 
$$S_{i+1} 
: = 
\begin{cases}
S_i \setminus \{-s_i\} & \text{ if $S_i \setminus \{-s_i\}$ is a complement to $T$,}\\
S_i & \text{ otherwise.}
\end{cases}
$$
Let $\calS$ denote the subset $\cap _{i \geq 1} S_i$ of $G$. We claim that the set $\calS$ is a minimal complement to $T$.

For each $y\in G$ and for each $i\geq 1$, there exist elements $s_{y, i}\in S_i , t_{y, i}\in T$ such that $$y = s_{y, i} + t_{y, i}.$$ Consequently, for some element $s_y \in S$, we have the equality $s_y = s_{y, i}$  for infinitely many $i$. Further, for such integers $i$, we have $t_y = t_{y, i}$ where $t_y: = y - s_y\in T$. This implies that $y - t_y = s_{y, i}$ holds for infinitely many $i$. Hence, for each integer $i\geq 1$, there exists an integer $\ell_i \geq i$ such that $y - t_y = s_{y, \ell_i}$, which yields $$y\in t_y  + S_{\ell_i} \subseteq t_y + S_i.$$ As a consequence, $y$ lies in $t_y +  S$, i.e., $y\in S + T$. Hence $\calS$ is an additive complement of $T$. It follows that $\calS$ is a minimal complement to $T$. 

\end{proof}

\begin{proof}[Proof of Theorem \ref{Thm}(1)]
Define the subsets $\calE_n$ of $\bbZ^d$ for $n\geq 0$ as follows. 
$$\calE_n 
= 
\begin{cases}
\{0\} & \text{ if } n = -1,\\
\emptyset & \text{ if } n = 0,\\
\{
- t_{n-1}
\}
+ 
(\calI_{n-1} \setminus ( (\cup_{-1\leq m \leq n-1} (\calE_m \cup (-\calE_m))) + \calW))
& \text{ if } n \geq 1. 
\end{cases}
$$
Define the subset $\calE$ of $\bbZ^d$ by 
$$\calE : = \cup _{n\geq -1} (\calE_n \cup (-\calE_n)).$$
For $n\geq 0$, 
the inclusions
\begin{align*}
\calE + \calW 
& \supseteq 
(\calE_{n+1} + t_n) \cup 
( (\cup_{-1\leq m \leq n} (\calE_m \cup (-\calE_m))) + \calW)\\
& \supseteq 
(\calI_{n} \setminus ( (\cup_{-1\leq m \leq n} (\calE_m \cup (-\calE_m))) + \calW))\cup 
( (\cup_{-1\leq m \leq n} (\calE_m \cup (-\calE_m))) + \calW)\\
& \supseteq 
\calI_n
\end{align*}
hold and hence $-\calI_n \subseteq (-\calE) + (-\calW) = \calE + \calW$. 
Moreover, the inclusions
$$
\calE + \calW 
\supseteq 
\calE_{-1} + \calW 
\supseteq \{0\}$$
hold. It follows that $\calW$ is an additive complement to $\calE$. 

We claim that $\calW$ is a minimal complement of $\calE$.
Since $t_n > 3t_{n-1}$ for $n\geq 1$, from Proposition \ref{Prop:M}, it follows that for $n\geq 0$, no point of $\calE \times \calW$ other than $(0, -t_n)$ goes to $-t_n$ under the addition map $\calE \times \calW \to \bbZ^d$, and hence no point of $\calE \times \calW$ other than $(0, t_n)$ goes to $t_n$ under the addition map $\calE \times \calW \to \bbZ^d$. Thus $\calW$ is a minimal complement of $\calE$. 

We claim that $\calE$ is a minimal complement to $\calW$. 
On the contrary, let us assume that $\calE$ is not a minimal complement to $\calW$. Hence $\calE\setminus \{e\}$ is an additive complement to $\calW$ for some $e\in \calE$. 
Note that $e \neq 0$. 
Since $\calE$ is symmetric, we may assume that $e$ lies in $\calE_{n+1}$ for some $n\geq 0$. Thus $t_n +e$ lies in $\calI_n$. 
It follows from Proposition \ref{Prop:M} that no element of $\calI_n$ lies in 
$$
((\cup _{m\geq n+2} \calE_m) + \calW)
\cup 
((\cup _{m\geq n+1} (-\calE_m)) + \calW)
\cup 
(\calE_{n+1} + (\calW \setminus \{t_n\})).$$
So
$t_n+e$ belongs to 
$(
(\cup_{0 \leq m \leq n} (\calE_m \cup (-\calE_m)))
+ \calW
)
\cup 
((\calE_{n+1} \setminus\{e\}) + \{t_n\})$. 
Since $e\in \calI_{n+1}$, it follows that 
$t_n + e$ lies in $(\calE_{n+1} \setminus\{e\}) + \{t_n\}$, which yields $e\in \calE_{n+1} \setminus\{e\}$. This contradicts the hypothesis that $\calE$ is not a minimal complement of $\calW$. 
\end{proof}

\begin{proof}[Proof of Theorem \ref{Thm}(2)]
Define the subsets $\calF_n$ of $\bbZ^d$ for $n\geq 0$ as follows. 
$$\calF_n 
= 
\begin{cases}
\{-t_0\} & \text{ if } n = 0,\\
\{
- t_{n-1}
\}
+ 
(\calI_{n-1} \setminus ( (\cup_{0\leq m \leq n-1} (\calF_m \cup (-\calF_m))) + \calW))
& \text{ if } n \geq 1. 
\end{cases}
$$
Define the subset $\calF$ of $\bbZ^d$ by 
$$\calF : = \cup _{n\geq 0} (\calF_n \cup (-\calF_n)).$$
For $n\geq 0$, 
the inclusions
\begin{align*}
\calF + \calW 
& \supseteq 
(\calF_{n+1} + t_n) \cup 
( (\cup_{0\leq m \leq n} (\calF_m \cup (-\calF_m))) + \calW)\\
& \supseteq 
(\calI_{n} \setminus ( (\cup_{0\leq m \leq n} (\calF_m \cup (-\calF_m))) + \calW))\cup 
( (\cup_{0\leq m \leq n} (\calF_m \cup (-\calF_m))) + \calW)\\
& \supseteq 
\calI_n
\end{align*}
hold and hence $-\calI_n \subseteq (-\calF) + (-\calW) = \calF + \calW$. 
Moreover, the inclusions
$$
\calF + \calW 
\supseteq 
\calF_{0} + \calW 
\supseteq \{0\}$$
hold. It follows that $\calW$ is an additive complement to $\calF$. 

We claim that $\calW$ is a minimal complement of $\calF$.
Since $t_n > 3t_{n-1}$ for $n\geq 1$, from Proposition \ref{Prop:M}, it follows that for $n\geq 0$, no point of $\calF \times \calW$ other than $(-2t_n, t_n)$ goes to $-t_n$ under the addition map $\calF \times \calW \to \bbZ^d$, and hence no point of $\calF \times \calW$ other than $(2t_n, -t_n)$ goes to $t_n$ under the addition map $\calF \times \calW \to \bbZ^d$. Thus $\calW$ is a minimal complement of $\calF$. 

We claim that $\calF$ is a minimal complement to $\calW$. 
On the contrary, let us assume that $\calF$ is not a minimal complement to $\calW$. Hence $\calF\setminus \{f\}$ is an additive complement to $\calW$ for some $f\in \calF$. 
Since $\calF$ is symmetric, we may assume that $f$ lies in $\calF_{n+1}$ for some $n\geq -1$. 
Since no point of $\calF \times \calW$ other than $(-t_0, -t_0)$ goes to $-2t_0$ under the addition map $\calF \times \calW \to \bbZ^d$, it follows that $f \neq -t_0$, i.e., $f\notin \calF_0$. 
So $f$ lies in $\calF_{n+1}$ for some $n\geq 0$. Thus $t_n +f$ lies in $\calI_n$. 
It follows from Proposition \ref{Prop:M} that no element of $\calI_n$ lies in 
$$
((\cup _{m\geq n+2} \calF_m) + \calW)
\cup 
((\cup _{m\geq n+1} (-\calF_m)) + \calW)
\cup 
(\calF_{n+1} + (\calW \setminus \{t_n\})).$$
So
$t_n+f$ belongs to 
$(
(\cup_{0 \leq m \leq n} (\calF_m \cup (-\calF_m)))
+ \calW
)
\cup 
((\calF_{n+1} \setminus\{f\}) + \{t_n\})$. 
Since $f\in \calI_{n+1}$, it follows that 
$t_n + f$ lies in $(\calF_{n+1} \setminus\{f\}) + \{t_n\}$, which yields $f\in \calF_{n+1} \setminus\{f\}$. This contradicts the hypothesis that $\calF$ is not a minimal complement of $\calW$. 
\end{proof}

\begin{proof}[Proof of Theorem \ref{Thm}(3)]
Let $\{m_k\}_{k\geq 0}$ be an increasing sequence such that $m_0 \geq 2$ and $t_{m_k -2}\geq (k+1)\bfone$ for all $k\geq 0$. We define the subsets $G_n$ of $\bbZ^d$ for $n\geq 0$ as follows. 
$$G_n 
= 
\begin{cases}
\calI_0 & \text{ if } n = 0, \\
\calX_{-2t_{n-1}, -t_{n-1} - t_{n-2}}
& \text{ if } n \geq 1 \text{ and } n \neq m_k \text{ for all } k \geq 0,\\
\calX_{- t_n + k\bfone, - t_n + (k+1)\bfone}
\cup 
\calX_{-2t_{n-1}, -t_{n-1} - t_{n-2}}
& \text{ if } n \geq 1 \text{ and } n = m_k \text{ for some } k \geq 0.\\
\end{cases}
$$
Let $G$ denote the union $\cup_{n\geq 0 } G_n$. Note that $\calT$ is an additive complement of $G$. Indeed, the inclusions
\begin{align*}
G + \calT 
& \supseteq 
\left(\cup_{n\geq 1}  (\calX_{-2t_{n-1}, -t_{n-1} - t_{n-2}}+ \calT) \right)
\bigcup 
\left(\cup_{k\geq 0} (\calX_{- t_{m_k} + k\bfone, - t_{m_k} + (k+1)\bfone}+ \calT)\right) \\
& \supseteq 
(\cup_{n\geq 1} \calI_{n-1} )
\cup 
\bbZ^d_{\geq 0}  \\
& = \bbZ^d
\end{align*}
hold, which shows that $G + \calT = \bbZ^d$. So $\calW$ is an additive complement to $G$.

We claim that $\calW$ is a minimal complement of $G \setminus (\{-2t_0\}\cup \{- t_n - 3t_{n-1} \,|\,n \geq 1\})$. For $k\geq 0$, the inequalities 
\begin{align*}
t_{m_k} - t_{m_k-1} - t_{m_k-2} 
& = 
(t_{m_k} - 2t_{m_k-1}) + (t_{m_k-1} - 2t_{m_k-2})  + t_{m_k-2} \\
& \geq t_{m_k-2} \\
& \geq (k+1)\bfone
\end{align*}
hold, which implies 
$$-t_{m_k}  +  (k+1)\bfone
\leq 
 - t_{m_k-1} - t_{m_k-2} ,$$
this yields 
$$
\bbZ^d_{\geq -t_{m_k}  +  (k+1)\bfone}
\supseteq 
\bbZ^d_{\geq  - t_{m_k-1} - t_{m_k-2} },$$
and hence 
$$
\calX_{-t_{m_k}, -t_{m_k}  +  (k+1)\bfone}
\subseteq 
\calX_{-t_{m_k}, - t_{m_k-1} - t_{m_k-2} }.$$
Thus $G_n \subseteq \calJ_n$ for all $n\geq 0$. Let $n$ be a positive integer. Note that $-3t_{n-1}, -4t_{n-1}$ lie in $\calI_n$. The inclusions 
\begin{align*}
\cup_{0 \leq m < n} G_m + \calW_{\leq -t_n}
& \subseteq 
(\bbZ^d\setminus \bbZ^d_{\geq 0}) + \bbZ^d_{\leq - t_n} \\
& \subseteq 
\bbZ^d\setminus \bbZ^d_{\geq -t_n}\\
& \subseteq 
\bbZ^d\setminus \bbZ^d_{\geq - 4t_{n-1}  }
\end{align*}
hold, and the inclusions 
\begin{align*}
\cup_{0 \leq m < n} G_m + \calW_{\geq  -t_{n-1}}
& \subseteq 
\bbZ^d_{\geq - t_{n-1} } + \bbZ^d_{\geq - t_{n-1}}\\
& \subseteq 
\bbZ^d_{\geq - 2t_{n-1} } \\
\end{align*}
hold. Using Proposition \ref{Prop:M}, it follows that the set 
$$
\cup _{m \neq n, n+1} G_m + \calW$$
contains none of $-3t_{n-1}, -4t_{n-1}$. 
Since $-3t_{n-1} , -4 t_{n-1}\in \calI_n$, and $G_{n+1} + t_n$ contains $\calI_n$, it follows that $- t_n - 3t_{n-1}, -t_n -4t_{n-1} \in G_{n+1}$. By Proposition \ref{Prop:M}, $G_{n+1} + (\calW \setminus \{t_n\})$ contains no element of $\calI_n$. So no point of $\cup _{m \neq n} G_m + \calW$ other than $(- t_n - 3t_{n-1}, t_n)$ goes to $-3t_{n-1}$, and no point of $\cup _{m \neq n} G_m + \calW$ other than $(- t_n - 4t_{n-1}, t_n)$ goes to $-4t_{n-1}$. Note that the inclusions
\begin{align*}
(\{-t_{n-1}\} + \calI_{n-1} )+ \calW_{> -t_{n-1}}
& \subseteq 
\bbZ^d_{\geq -2t_{n-1} } + \bbZ^d_{> -t_{n-1}}\\
& \subseteq 
\bbZ^d_{> -2t_{n-1} -t_{n-1}}\\
& = 
\bbZ^d_{> -3t_{n-1} }
\end{align*}
hold, the inclusions
\begin{align*}
(\{-t_{n-1}\} + \calI_{n-1} )+ \calW_{\leq -t_{n}}
& \subseteq 
\calX_{-t_{n-1}, -t_{n-2}} +  \calW_{\leq -t_{n}}\\
& \subseteq 
\calX_{-t_{n-1}, 0} +  \calW_{\leq -t_{n}}\\
& \subseteq 
\bbZ^d \setminus \bbZ^d_{\geq -t_n}
\end{align*}
hold. Moreover, the inclusion 
$$
\calX_{- t_n + k\bfone, - t_n + (k+1)\bfone} + \calW_{\geq t_n} \subseteq \bbZ^d_{\geq 0} 
$$
holds when $n = m_k$ for some $k\geq 0$, and the inclusions 
\begin{align*}
\calX_{- t_n + k\bfone, - t_n + (k+1)\bfone}  + \calW_{\leq t_{n-1}}
& \subseteq 
\bbZ^d\setminus \bbZ^d_{\geq - t_n +  (k+1)\bfone + t_{n-1}} \\
& \subseteq 
\bbZ^d\setminus \bbZ^d_{\geq - 6t_{n-1} +  (k+1)\bfone + t_{n-1}} \\
& = 
\bbZ^d\setminus \bbZ^d_{\geq - 5t_{n-1} +  (k+1)\bfone} \\
& \subseteq 
\bbZ^d\setminus \bbZ^d_{\geq - 4t_{n-1}  - t_{n-2} +  (k+1)\bfone} \\
& \subseteq 
\bbZ^d\setminus \bbZ^d_{\geq - 4t_{n-1}  } 
\end{align*}
hold when $n = m_k$ for some $k\geq 0$. Also note that $G_n$ contains $- 2t_{n-1}$. It follows that no element of $G_n \times \calW$ other than $(-2t_{n-1}, -t_{n-1})$ goes to $-3t_{n-1}$ under the addition map $G_n\times \calW \to \bbZ^d$. Hence no element of $G \times \calW$ other than $(-2t_{n-1}, -t_{n-1}), (- t_n - 3t_{n-1}, t_n)$ goes to $-3t_{n-1}$ under the addition map $G\times \calW \to \bbZ^d$. Moreover, $G_n + \calW$ does not contain $-4t_{n-1}$. Hence no element of $G \times \calW$ other than $(- t_n - 4t_{n-1}, t_n)$ goes to $-4t_{n-1}$ under the addition map $G\times \calW \to \bbZ^d$. Proposition \ref{Prop:M} implies that 
$$ - t_0
\notin
(\cup_{m \geq 2}  G_m + \calW)
\cup 
(G_{1} + (\calW \setminus \{t_{0}\})).
$$
Also note that the inclusions
\begin{align*}
G_0 + \calW_{\geq t_0}
& \subseteq 
\calX_{-t_0, 0} + \calW_{\geq t_0}\\
& \subseteq 
\bbZ^d_{\geq -t_0} + \calW_{\geq t_0}\\
& \subseteq 
\bbZ^d_{\geq 0}
\end{align*}
hold and the inclusions
\begin{align*}
G_0 + \calW_{\leq -t_0}
& \subseteq 
\calX_{-t_0, 0} + \calW_{\leq -t_0}\\
& \subseteq 
\bbZ^d \setminus \bbZ^d_{\geq -t_0}
\end{align*}
hold, and hence no element of $G\times \calW$ other than $(-2t_0, t_0), (-t_0, 0)$ goes to $-t_0$ under the addition map $G\times \calW \to \bbZ^d$. It follows that $\calW$ is a minimal complement to $G \setminus (\{-2t_0\}\cup \{- t_n - 3t_{n-1} \,|\,n \geq 1\})$. 

We claim that $\calW$ and some subset of $G \setminus (\{-2t_0\}\cup \{- t_n - 3t_{n-1} \,|\,n \geq 1\})$ form a co-minimal pair. By Proposition \ref{Prop:M}, each element of $\bbZ^d\setminus \bbZ^d_{\geq 0} = \cup_{n\geq 0} \calI_n$ can be expressed as a sum of an element of $G$ and an element of $\calW$ only in finitely many ways. Note that the inclusions 
\begin{align*}
G_m + \calW_{\leq t_{m-1}}
& \subseteq 
\calX_{-t_m, - t_{m-1} } + \calW_{\leq t_{m-1}}\\
& \subseteq  
\bbZ^d\setminus \bbZ^d_{\geq 0}
\end{align*}
hold for $m\geq 1$ and $r\leq m-1$, the inclusions 
\begin{align*}
(\{-t_{m-1}\} + \calI_{m-1} ) + \calW_{\geq  t_m} 
& \subseteq
\bbZ^d_{\geq - 2t_{m-1} } + \bbZ^d_{\geq  t_m}\\
& \subseteq
\bbZ^d_{\geq t_m- 2t_{m-1} }
\end{align*}
hold for $m\geq 1$,
the inequality
$$
\calX_{-t_{m_k} + k\bfone, -t_{m_k} + (k+1)\bfone } + t_r
\geq 
\bbZ^d_{\geq - t_{m_k} + k\bfone + t_r} 
= 
\bbZ^d_{\geq k\bfone}
$$
holds for $k\geq 0$ and for $r\geq m_k$, and hence 
$$\cup_{m\geq M} G_m + \calW$$
does not contain a given point of $\bbZ^d_{\geq 0}$ for some large enough $M$. 
Hence any given element element of $\bbZ^d_{\geq 0}$ can be expressed as a sum of an element of $G$ and an element of $\calW$ only in finitely many ways. So each element of $\bbZ^d$ can be expressed as a sum of an element of $G$ and an element of $\calW$ only in finitely many ways. In particular, each element of $\bbZ^d$ can be expressed as a sum of an element of $G \setminus (\{-2t_0\}\cup \{- t_n - 3t_{n-1} \,|\,n \geq 1\})$ and an element of $\calW$ only in finitely many ways. By Lemma \ref{Lemma:Finiteness}, it follows that some nonempty subset $\calG$ of $G \setminus (\{-2t_0\}\cup \{- t_n - 3t_{n-1} \,|\,n \geq 1\})$ is a minimal complement to $\calW$. Since $\calW$ is a minimal complement to $G \setminus (\{-2t_0\}\cup \{- t_n - 3t_{n-1} \,|\,n \geq 1\})$, it follows that $\calW$ is a minimal complement to $\calG$. Hence $(\calG, \calW)$ is a co-minimal pair. 
\end{proof}

\begin{proof}[Proof of Theorem \ref{Thm}(4)]
Define the subsets $\calP_n$ of $\bbZ^d$ for $n\geq 0$ as follows. 
$$\calP_n 
= 
\begin{cases}
\{0\} & \text{ if } n = -1,\\
\{-t_0\} & \text{ if } n = 0,\\
\{
- t_{n-1}
\}
+ 
(\calI_{n-1} \setminus ( (\cup_{-1\leq m \leq n-1} (\calP_m \cup (-\calP_m))) + \calV))
& \text{ if } n \geq 1. 
\end{cases}
$$
Define the subset $\calP$ of $\bbZ^d$ by 
$$\calP : = \cup _{n\geq -1} (\calP_n \cup (-\calP_n)).$$
For $n\geq 1$, 
the inclusions
\begin{align*}
\calP + \calV 
& \supseteq 
(\calP_{n+1} + t_n) \cup 
( (\cup_{-1\leq m \leq n} (\calP_m \cup (-\calP_m))) + \calV)\\
& \supseteq 
(\calI_{n} \setminus ( (\cup_{-1\leq m \leq n} (\calP_m \cup (-\calP_m))) + \calV))\cup 
( (\cup_{-1\leq m \leq n} (\calP_m \cup (-\calP_m))) + \calV)\\
& \supseteq 
\calI_n
\end{align*}
hold and hence $-\calI_n \subseteq (-\calP) + (-\calV) = \calP + \calV$. 
Moreover, the inclusions
$$\calI_0\subseteq \calP_1 + t_0, $$
$$-\calI_0\subseteq (-\calP_1) + (- t_0), $$
$$
\calP + \calV 
\supseteq 
\calP_{0} + \calV 
\supseteq \{0\}$$
hold. It follows that $\calV$ is an additive complement to $\calP$. 

We claim that $\calV$ is a minimal complement of $\calP$.
Since $t_n > 3t_{n-1}$ for $n\geq 1$, from Proposition \ref{Prop:M}, it follows that for $n\geq 0$, no point of $\calP \times \calV$ other than $(0, -t_n)$ goes to $-t_n$ under the addition map $\calP \times \calV \to \bbZ^d$, and hence no point of $\calP \times \calV$ other than $(0, t_n)$ goes to $t_n$ under the addition map $\calP \times \calV \to \bbZ^d$. Thus $\calV$ is a minimal complement of $\calP$. 

We claim that $\calP$ is a minimal complement to $\calV$. 
On the contrary, let us assume that $\calP$ is not a minimal complement to $\calV$. Hence $\calP\setminus \{p\}$ is an additive complement to $\calV$ for some $p\in \calP$. 
Note that $p \neq 0$. 
Since $\calP$ is symmetric, we may assume that $p\in \calP_{n+1}$ for some $n\geq -1$.
Since no point of $\calP \times \calV$ other than $(-t_0, -t_0)$ goes to $-2t_0$ under the addition map $\calP \times \calV \to \bbZ^d$, it follows that $p \neq -t_0$, i.e., $p\notin \calP_0$. 
So $p$ lies in $\calP_{n+1}$ for some $n\geq 0$. Thus $t_n +p$ lies in $\calI_n$. 
It follows from Proposition \ref{Prop:M} that no element of $\calI_n$ lies in 
$$
((\cup _{m\geq n+2} \calP_m) + \calV)
\cup 
((\cup _{m\geq n+1} (-\calP_m)) + \calV)
\cup 
(\calP_{n+1} + (\calV \setminus \{t_n\})).$$
So
$t_n+p$ belongs to 
$(
(\cup_{0 \leq m \leq n} (\calP_m \cup (-\calP_m)))
+ \calV
)
\cup 
((\calP_{n+1} \setminus\{p\}) + \{t_n\})$. 
Since $p\in \calI_{n+1}$, it follows that 
$t_n + p$ lies in $(\calP_{n+1} \setminus\{p\}) + \{t_n\}$, which yields $p\in \calP_{n+1} \setminus\{p\}$. This contradicts the hypothesis that $\calP$ is not a minimal complement of $\calV$. 
\end{proof}

\begin{proof}[Proof of Theorem \ref{Thm}(5)]
Define the subsets $\calQ_n$ of $\bbZ^d$ for $n\geq 0$ as follows. 
$$\calQ_n 
= 
\begin{cases}
\{-t_0\} & \text{ if } n = 0,\\
\{
- t_{n-1}
\}
+ 
(\calI_{n-1} \setminus ( (\cup_{0\leq m \leq n-1} (\calQ_m \cup (-\calQ_m))) + \calV))
& \text{ if } n \geq 1. 
\end{cases}
$$
Define the subset $\calQ$ of $\bbZ^d$ by 
$$\calQ : = \cup _{n\geq 0} (\calQ_n \cup (-\calQ_n)).$$
For $n\geq 0$, 
the inclusions
\begin{align*}
\calQ + \calV 
& \supseteq 
(\calQ_{n+1} + t_n) \cup 
( (\cup_{0\leq m \leq n} (\calQ_m \cup (-\calQ_m))) + \calV)\\
& \supseteq 
(\calI_{n} \setminus ( (\cup_{0\leq m \leq n} (\calQ_m \cup (-\calQ_m))) + \calV))\cup 
( (\cup_{0\leq m \leq n} (\calQ_m \cup (-\calQ_m))) + \calV)\\
& \supseteq 
\calI_n
\end{align*}
hold and hence $-\calI_n \subseteq (-\calQ) + (-\calV) = \calQ + \calV$. 
Moreover, the inclusions
$$
\calQ + \calV 
\supseteq 
\calQ_{0} + \calV 
\supseteq \{0\}$$
hold. It follows that $\calV$ is an additive complement to $\calQ$. 

We claim that $\calV$ is a minimal complement of $\calQ$.
Since $t_n > 3t_{n-1}$ for $n\geq 1$, from Proposition \ref{Prop:M}, it follows that for $n\geq 0$, no point of $\calQ \times \calV$ other than $(-2t_n, t_n)$ goes to $-t_n$ under the addition map $\calQ \times \calV \to \bbZ^d$, and hence no point of $\calQ \times \calV$ other than $(2t_n, -t_n)$ goes to $t_n$ under the addition map $\calQ \times \calV \to \bbZ^d$. Thus $\calV$ is a minimal complement of $\calQ$. 

We claim that $\calQ$ is a minimal complement to $\calV$. 
On the contrary, let us assume that $\calQ$ is not a minimal complement to $\calV$. Hence $\calQ\setminus \{q\}$ is an additive complement to $\calV$ for some $q\in \calQ$. 
Since $\calQ$ is symmetric, we may assume that $q\in \calQ_{n+1}$ for some $n\geq -1$.
Since no point of $\calQ \times \calV$ other than $(-t_0, -t_0)$ goes to $-2t_0$ under the addition map $\calQ \times \calV \to \bbZ^d$, it follows that $q \neq -t_0$, i.e., $q\notin \calQ_0$. 
So $q$ lies in $\calQ_{n+1}$ for some $n\geq 0$. Thus $t_n +q$ lies in $\calI_n$. 
It follows from Proposition \ref{Prop:M} that no element of $\calI_n$ lies in 
$$
((\cup _{m\geq n+2} \calQ_m) + \calV)
\cup 
((\cup _{m\geq n+1} (-\calQ_m)) + \calV)
\cup 
(\calQ_{n+1} + (\calV \setminus \{t_n\})).$$
So
$t_n+q$ belongs to 
$(
(\cup_{0 \leq m \leq n} (\calQ_m \cup (-\calQ_m)))
+ \calV
)
\cup 
((\calQ_{n+1} \setminus\{q\}) + \{t_n\})$. 
Since $q\in \calI_{n+1}$, it follows that 
$t_n + q$ lies in $(\calQ_{n+1} \setminus\{q\}) + \{t_n\}$, which yields $q\in \calQ_{n+1} \setminus\{q\}$. This contradicts the hypothesis that $\calQ$ is not a minimal complement of $\calV$. 
\end{proof}

\begin{proof}[Proof of Theorem \ref{Thm}(6)]
Let $G$ be as in the proof of Theorem \ref{Thm}(3). 
Since $G + \calT = \bbZ^d$, it follows that $\calV$ is an additive complement to $G$.

We claim that $\calV$ is a minimal complement of $G \setminus \{- t_n - 3t_{n-1} \,|\,n \geq 1\}$. For $n\geq 1$, no element of $G \times \calW$ other than $(-2t_{n-1}, -t_{n-1}), (- t_n - 3t_{n-1}, t_n)$ goes to $-3t_{n-1}$ under the addition map $G\times \calW \to \bbZ^d$, and no element of $G \times \calW$ other than $(- t_n - 4t_{n-1}, t_n)$ goes to $-4t_{n-1}$ under the addition map $G\times \calW \to \bbZ^d$. Moreover, no element of $G\times \calW$ other than $(-2t_0, t_0), (-t_0, 0)$ goes to $-t_0$ under the addition map $G\times \calW \to \bbZ^d$. It follows that $\calV$ is a minimal complement to $G \setminus \{- t_n - 3t_{n-1} \,|\,n \geq 1\}$. 

Each element of $\bbZ^d$ can be expressed as a sum of an element of $G$ and an element of $\calW$ only in finitely many ways. In particular, each element of $\bbZ^d$ can be expressed as a sum of an element of $G \setminus \{- t_n - 3t_{n-1} \,|\,n \geq 1\}$ and an element of $\calV$ only in finitely many ways. By Lemma \ref{Lemma:Finiteness}, it follows that some nonempty subset $\calR$ of $G \setminus \{- t_n - 3t_{n-1} \,|\,n \geq 1\}$ is a minimal complement to $\calV$. Since $\calV$ is a minimal complement to $G \setminus \{- t_n - 3t_{n-1} \,|\,n \geq 1\}$, it follows that $\calV$ is a minimal complement to $\calR$. Hence $(\calR, \calV)$ is a co-minimal pair. 
\end{proof}

\begin{proof}[Proof of Theorem \ref{Thm}(7)]
Let $G$ be the set as in the proof of Theorem \ref{Thm}(3). 
Note that $\calT$ is an additive complement of $G$. 
Let $n\geq 0$ be an integer. 
Proposition \ref{Prop:M} implies that 
$$ - t_n
\notin
(\cup_{m \geq n+2}  G_m + \calT)
\cup 
(G_{n+1} + (\calT \setminus \{t_{n}\})).
$$
Note that the inclusions 
\begin{align*}
G_m + \calT
& \subseteq 
\bbZ^d_{\geq - t_m } + \bbZ^d_{\geq t_0} \\
& \subseteq 
\bbZ^d_{\geq - t_m + t_0}\\
& \subseteq 
\bbZ^d_{\geq -t_{n} + t_0} 
\end{align*}
hold for any $m\leq n$.
So 
$$ - t_n
\notin
(\cup_{m \neq n, n+1}  G_m + \calT)
\cup 
(G_{n+1} + (\calT \setminus \{t_{n}\})).
$$
Hence $\calT$ is a minimal complement of $G$. 

Since each element of $\bbZ^d$ can be expressed as a sum of an element of $G$ and an element of $\calV$ only in finitely many ways, it follows that each element of $\bbZ^d$ can be expressed as a sum of an element of $G$ and an element of $\calT$ only in finitely many ways. 
By Lemma \ref{Lemma:Finiteness}, some nonempty subset $\calS$ of $G$ is a minimal complement to $\calT$. Since $\calT$ is a minimal complement to $G$, it follows that $(\calS, \calT)$ is a co-minimal pair. 
\end{proof}

\begin{proof}
[Proof of Theorem \ref{Thm:UncountableSubsets}]
Let $G$ be a finitely generated abelian group. Then $G$ is isomorphic to the direct product of a finite group $G_\tors$ and a free abelian group $\bbZ^d$. Given any infinite subset $X$ of $G$, it contains an infinite subset $Y$ such that the projections of all the elements of $Y$ to $G_\tors$ are equal. 
It suffices to show that any infinite subset of $\bbZ^d$ has uncountably many subsets which admit minimal complements. Let $X$ be an infinite subset of $\bbZ^d$. Let $S$ be the subset consisting of the integers $1\leq i\leq d$ such that the absolute values of the $i$-th coordinate of the elements of $X$ form an unbounded set. Note that $S$ is nonempty. Thus $X$ has an infinite subset $Y$ such that the $i$-th coordinate of all the elements of $Y$ are equal for any $i\in \{1, 2, \cdots, d\}\setminus S$, and the absolute values of the $i$-th coordinate of the elements of $Y$ form an unbounded set for any $i\in S$. 
Replacing $X$ by one of its translate, we may assume that the $i$-th coordinates of the elements of $Y$ are equal to $0$ for any $i\in \{1, 2, \cdots, d\}\setminus S$.
Replacing $X$ by one of its image under an automorphism of $\bbZ^d$, we may assume that $Y$ has a subset $Z$ such that the $i$-th coordinates of the elements of $Z$ form an infinite subset of $\bbZ_{\geq 1}$ for any $i\in \{1, 2, \cdots, d\}\setminus S$.

Let $d'$ denote the cardinality of $S$. It suffices to show that if $A$ an infinite subset of $\bbZ^{d'}$ such that for any $1\leq i \leq d'$, the $i$-th coordinates of the points of $A$ form an infinite subset of $\bbZ_{\geq 1}$, then $A$ has uncountably many subsets which admit minimal complements.
Note that there is a sequence $\{x_n\}_{n\geq 0}$ contained in $A$ such that $x_n\geq 6x_{n-1}$ for all $n\geq 1$. For any subsequence $\{x_{n_k}\}$ of this sequence, the inequality $x_{n_k} \geq 6 x_{n_{k-1}}$ holds for any $k\geq 1$ and moreover, $x_{n_0} > 0$ holds and all the coordinates of any term of this subsequence are positive. By Theorem \ref{Thm}, the subset $\{x_{n_k} \,|\, k \geq 0\}$ of $A$ admits a minimal complement in $\bbZ^{d'}$. 
Since the finite subsets of $\bbN$ are precisely the bounded subsets of $\bbN$, it follows that the number of finite subsets of $\bbN$ is countable. Thus the number of infinite subsets of $\bbN$ is uncountable. So $\{x_n\}_{n\geq 0}$ has uncountably many subsequences. It follows that $A$ has uncountably many subsets which admit minimal complements.
\end{proof}

\begin{proof}
[Proof of Theorem \ref{Thm:Uncountable}]
Since the finite subsets of $\bbN$ are precisely the bounded subsets of $\bbN$, it follows that the number of finite subsets of $\bbN$ is countable. Thus the number of infinite subsets of $\bbN$ is uncountable. So any sequence has uncountably many subsequences. Note that if $\{x_n\}_{n\geq 0}$ is a sequence in $\bbZ^d$ such that $x_0 = (1, \cdots, 1)$ and 
$$x_n \geq 6 x_{n-1}
\quad
\text{ for all } 
n \geq 1,$$
then any subsequence of $\{x_n\}_{n\geq 0}$ satisfies the hypothesis of each of the seven parts of Theorem \ref{Thm}. Thus Theorem \ref{Thm:Uncountable} follows from the existence of a sequence $\{x_n\}_{n\geq 0}$ in $\bbZ^d$ such that $x_0 = (1, \cdots, 1)$ and 
$$x_n \geq 6 x_{n-1}
\quad
\text{ for all } 
n \geq 1,$$
which exists, for instance, consider the sequence $x_n = (6^n, \cdots, 6^n)$. 
\end{proof}

As an application of Theorem \ref{Thm}, we provide several examples of subsets of $\bbZ$ each of which is a part of a co-minimal pair. 

\begin{corollary}
For each of the following subsets $S_{1},S_{2}$, there exists subsets $S'_{1},S'_{2}$ such that $(S_{1},S'_{1})$ and $(S_{2},S'_{2})$ form co-minimal pairs.
\begin{enumerate}
\item $S_{1} := \{n^k \,|\, k \geq 0\}$ for any $n\geq 3$. 
\item $S_{2}:= \{2^k + k\,|\, k \geq 0\}$.
\end{enumerate}
\end{corollary}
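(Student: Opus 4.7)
The plan is to deduce both items from Theorem~\ref{Thm}(7) specialised to the one-dimensional case $d=1$; this is the only part of Theorem~\ref{Thm} that exhibits $\calT$ itself, rather than its symmetrisations $\calV$ or $\calW$, as a member of a co-minimal pair. In each case I would enumerate the elements of the given set in strictly increasing order as $t_0<t_1<t_2<\cdots$ and verify the numerical hypotheses demanded by Theorem~\ref{Thm}(7), namely $t_0>0$, $t_n\ge 2t_{n-1}$ for every $n\ge 1$, and $t_n-2t_{n-1}\to\infty$. The theorem then automatically manufactures a subset $S_i'\subseteq\bbZ_{<0}$ forming a co-minimal pair with $S_i$.

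For part (1), I would take $t_k=n^k$ for $k\ge 0$. The first hypothesis is immediate since $t_0=1>0$. Because $n\ge 3$, one has $t_k=n\cdot t_{k-1}\ge 3t_{k-1}$, which is strictly stronger than the required $t_k\ge 2t_{k-1}$. The identity $t_k-2t_{k-1}=(n-2)n^{k-1}$ then guarantees the divergence $t_k-2t_{k-1}\to\infty$, as $n-2\ge 1$. Hence Theorem~\ref{Thm}(7) supplies the desired $S_1'$.

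For part (2), I would take $t_k=2^k+k$ for $k\ge 0$, so that $t_0=1>0$. The remaining numerical hypotheses reduce to an elementary comparison of $t_k=2^k+k$ with $2t_{k-1}=2^k+2(k-1)$, together with an analysis of $t_k-2t_{k-1}$. Once these lacunarity and divergence conditions have been checked, Theorem~\ref{Thm}(7) once again yields the required $S_2'$.

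The main step in both items is therefore the numerical verification of the hypotheses of Theorem~\ref{Thm}(7); part (1) is essentially a bookkeeping exercise, whereas part (2) requires somewhat more delicate control of the sub-exponential correction term $+k$ against the dominant exponential growth $2^k$, and I would expect this verification to be the principal technical obstacle.
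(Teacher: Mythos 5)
Your treatment of part (1) is correct and is essentially the route the paper takes: the paper's proof consists of the single remark that the growth conditions of Theorem \ref{Thm} are satisfied, and for $t_k=n^k$ with $n\geq 3$ your verification ($t_k=n\,t_{k-1}\geq 3t_{k-1}$ and $t_k-2t_{k-1}=(n-2)n^{k-1}\to\infty$) does make Theorem \ref{Thm}(7) applicable.

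Part (2), however, contains a genuine gap, and it sits exactly at the step you defer as ``the principal technical obstacle''. Writing $t_k=2^k+k$, one has
$$t_k-2t_{k-1}=(2^k+k)-(2^k+2k-2)=2-k,$$
so $t_k-2t_{k-1}$ is negative for every $k\geq 3$ (for instance $t_3=11<12=2t_2$) and tends to $-\infty$, not $+\infty$. Thus the standing hypothesis $t_n\geq 2t_{n-1}$ of Theorem \ref{Thm} --- which is needed even to define the sets $\calJ_n$ in its proof --- fails for $S_2$, and \emph{a fortiori} the divergence condition required in part (7) fails as well. Your proposed reduction therefore cannot be completed: Theorem \ref{Thm}(7) does not apply to $S_2$ in its natural enumeration, and passing to a sufficiently sparse subsequence would only produce a co-minimal pair involving a proper subset of $S_2$, not $S_2$ itself. (In fairness, the paper's own one-line proof asserts that the growth condition ``clearly'' holds for both sets, and the computation above shows that this assertion is unjustified for $S_2$; but since your argument explicitly rests on carrying out this verification, the failure is fatal to the write-up as it stands. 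Establishing part (2) would require either a weakening of the hypotheses of Theorem \ref{Thm} or a separate construction.)
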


\begin{proof}
	The growth condition of the elements of the subsets (considered as sequences) in this corollary clearly satisfies the condition of Theorem \ref{Thm}.
\end{proof}

\section{Acknowledgements}
The first author would like to thank the Department of Mathematics at the Technion where a part of the work was carried out. 
The second author would like to acknowledge the Initiation Grant from the Indian Institute of Science Education and Research Bhopal, and the INSPIRE Faculty Award from the Department of Science and Technology, Government of India.

\def\cprime{$'$} \def\Dbar{\leavevmode\lower.6ex\hbox to 0pt{\hskip-.23ex
  \accent"16\hss}D} \def\cfac#1{\ifmmode\setbox7\hbox{$\accent"5E#1$}\else
  \setbox7\hbox{\accent"5E#1}\penalty 10000\relax\fi\raise 1\ht7
  \hbox{\lower1.15ex\hbox to 1\wd7{\hss\accent"13\hss}}\penalty 10000
  \hskip-1\wd7\penalty 10000\box7}
  \def\cftil#1{\ifmmode\setbox7\hbox{$\accent"5E#1$}\else
  \setbox7\hbox{\accent"5E#1}\penalty 10000\relax\fi\raise 1\ht7
  \hbox{\lower1.15ex\hbox to 1\wd7{\hss\accent"7E\hss}}\penalty 10000
  \hskip-1\wd7\penalty 10000\box7}
  \def\polhk#1{\setbox0=\hbox{#1}{\ooalign{\hidewidth
  \lower1.5ex\hbox{`}\hidewidth\crcr\unhbox0}}}
\providecommand{\bysame}{\leavevmode\hbox to3em{\hrulefill}\thinspace}
\providecommand{\MR}{\relax\ifhmode\unskip\space\fi MR }
\providecommand{\MRhref}[2]{%
  \href{http://www.ams.org/mathscinet-getitem?mr=#1}{#2}
}
\providecommand{\href}[2]{#2}

\end{document}